\documentclass[a4paper, 10pt]{article}

\usepackage[UKenglish]{babel}
\usepackage[utf8]{inputenc}

\usepackage[T1]{fontenc}
\usepackage{lmodern}

\usepackage{amsmath, amssymb, amsfonts, mathtools, amsthm}
\usepackage{url}
\usepackage{authblk}
\usepackage[ruled, vlined, linesnumbered]{algorithm2e}
\usepackage{booktabs}

\usepackage{graphicx}

\newtheorem{proposition}{Proposition}

\providecommand{\abs}[1]{\lvert#1\rvert}
\providecommand{\norm}[1]{\lVert#1\rVert}

\newcommand{\R}{\mathbb{R}}
\newcommand{\N}{\mathbb{N}}

\DeclareMathOperator*{\argmin}{arg\,min}

\newcommand{\BigO}[1]{\ensuremath{O(#1)}}

\newcommand{\Id}{\ensuremath{\mathrm{Id}}}

\setlength{\tabcolsep}{3pt}

\begin{document}

\title{A Dynamic Programming Solution to Bounded Dejittering Problems}

\author{Lukas F. Lang}
\affil{\footnotesize Johann Radon Institute for Computational and Applied Mathematics, Austrian Academy of Sciences, Altenberger Stra{\ss}e~69, 4040 Linz, Austria}

\date{}
\maketitle

\begin{abstract}
\noindent
We propose a dynamic programming solution to image dejittering problems with bounded displacements and obtain efficient algorithms for the removal of line jitter, line pixel jitter, and pixel jitter.
\end{abstract}

\section{Introduction}

In this article we devise a dynamic programming (DP) solution to dejittering problems with bounded displacements.
In particular, we consider instances of the following image acquisition model.
A $D$-dimensional image $u^{\delta}: \Omega \to \R^{D}$ defined on a two-dimensional domain $\Omega \subset \R^{2}$ is created by the equation
\begin{equation}
	u^{\delta} = u \circ \Phi,
\label{eq:model}
\end{equation}
where $u: \Omega \to \R^{D}$ is the original, undisturbed image, $\Phi: \Omega \to \Omega$ is a displacement perturbation, and $\circ$ denotes function composition.
Typically, $\Phi = (\Phi_{1}, \Phi_{2})^{\top}$ is a degradation generated by the acquisition process and is considered random.
In addition, $u^{\delta}$ may exhibit additive noise $\eta$.
The central theme of this article is displacement error correction, which is to recover the original image solely from the corrupted image.

One particularly interesting class are dejittering problems.
Jitter is a common artefact in digital images or image sequences and is typically attributed to inaccurate timing during signal sampling, synchronisation issues, or corrupted data transmission~\cite{Kok98,Lab03}.
Most commonly, it is observed as \emph{line jitter} where entire lines are mistakenly shifted left or right by a random displacement.
As a result, shapes appear jagged and unappealing to the viewer.
Other---equally disturbing---defects are \emph{line pixel jitter} and \emph{pixel jitter}.
The former type is due to a random shift of each position in horizontal direction only, while the latter is caused by a random displacement in $\R^{2}$.
See Fig.~\ref{fig:jitter} for examples.

\begin{figure}[t]
	\centering
	\includegraphics[width=0.24\textwidth]{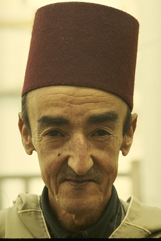}
	\hfill
	\includegraphics[width=0.24\textwidth]{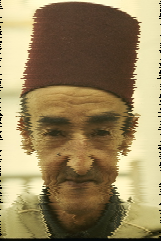}
	\hfill
	\includegraphics[width=0.24\textwidth]{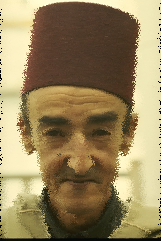}
	\hfill
	\includegraphics[width=0.24\textwidth]{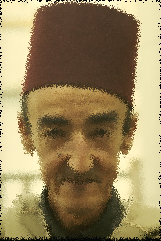}
\caption{Original image, line jitter ($\rho = 4 \, \text{pixels}$), line pixel jitter ($\rho = 5 \, \text{pixels}$), pixel jitter corruption ($\rho = 5 \, \text{pixels}$).}
\label{fig:jitter}
\end{figure}

The problem of \emph{dejittering} is to reverse the observed effect and has been studied in numerous works, see \cite{DonPatSchOek15,KanShe06,KanShe07,Kok98,Lab03,LenSch11,Nik09a,Nik09,She04}.
Recent efforts either deal with finite-dimensional minimisation problems~\cite{Lab03,Nik09a,Nik09} or rely on an infinite-dimensional setting~\cite{DonPatSchOek15,LenSch11,She04}.
Typically, variational approaches such as \cite{DonPatSchOek15,LenSch11} are based on a linearisation of \eqref{eq:model} and try to directly infer the true image.
Alternatively, as done in~\cite{She04}, one can alternate between finding the displacement and inferring the original image.
Both approaches typically enforce a certain regularity of the reconstructed image.

In this article, we investigate efficient solutions to dejittering models introduced in \cite{DonPatSchOek15,LenSch11}.
However, we assume the magnitude of each component of the displacement $x - \Phi(x)$, where $x = (x_{1}, x_{2})^{\top} \in \Omega$, to be bounded by a constant $\rho > 0$. That is,
\begin{equation}
	\norm{x_{i} - \Phi_{i}(x)}_{L^{\infty}(\Omega)} \le \rho.
\label{eq:bd}
\end{equation}
The main idea is to assume that \eqref{eq:model} can be inverted (locally) by reconstructing the original value from a small neighbourhood.
Even though not guaranteed theoretically, this approach is found to work surprisingly well for Gaussian distributed displacements of zero mean.
A possible explanation is that the original value at a certain position $x \in \Omega$ is likely to occur in the close vicinity of $x$.
Moreover, it does not require derivatives of the disturbed data, which typically occur during linearisation of \eqref{eq:model}, see \cite{DonPatSchOek15,LenSch11}.
The obvious drawback is that $u(x)$ can only take values which appear in $u^{\delta}$ within a small neighbourhood of $x$.
As a result, its capabilities are limited in the presence of noise.

We build on previous work by Laborelli~\cite{Lab03} and Nikolova~\cite{Nik09a,Nik09}, and utilise DP for the numerical solution.
For the removal of line jitter, we extend the algorithm in \cite{Lab03} to include regularisation of the displacement, yielding a stabler reconstruction.
In comparison to the greedy approach in \cite{Nik09a,Nik09} we are able to recover a global minimiser to the one-dimensional non-linear and possibly non-convex minimisation problem formulated in Sec.~\ref{sec:model}.
For the case of line pixel jitter, we rewrite the problem into a series of independent minimisation problems, each of which can be solved optimally via DP.
For pixel jitter removal we follow a different strategy as the regularisation term in the considered functional prohibits a straightforward decomposition into simpler subproblems.
We employ block coordinate descent, which is an iterative method and is guaranteed to converge energy-wise \cite{CheKol14}.
All of our algorithms generalise to $D$-dimensional images defined on $\R^{2}$.
Moreover, generalisation to regularisation functionals involving higher-order derivatives of the sought image and to higher-order discretisation accuracy is straightforward.
Table~\ref{tab:summary} summarises the results of this work.

\begin{table}[t]
\centering
\begin{tabular}{lllll}
	Algorithm & Time & Memory & Comment \\
	\midrule
	Line jitter & $\BigO{mn\rho^{q}}$ & $\BigO{n\rho^{q-1}}$ & same as \cite{Lab03} but with regularisation\\
	\midrule
	Line pixel jitter & $\BigO{mn\rho^{q}}$ & $\BigO{n\rho^{q-1}}$ & \\
	\midrule	
	Pixel jitter & $\BigO{mn\rho^{2q}}$ & $\BigO{n\rho^{2(q-1)}}$ & per-iteration complexity \\\\
\end{tabular}
\caption{Summary of the algorithms. $m$ and $n$ denote the columns and the rows of an image, respectively, $\rho \in \N$ is the maximum displacement, and $q \in \N$ is the order of the highest occurring derivative. For instance, $q = 2$ for first-order derivatives. The dimension $D$ of the image is assumed constant.}
\label{tab:summary}
\end{table}

\paragraph{\textbf{Notation.}}
Let $\Omega = [0, W] \times [0, H] \subset \mathbb{R}^{2}$ be a two-dimensional domain.
For $x = (x_{1}, x_{2})^{\top} \in \R^{2}$, the $p$-th power of the usual $p$-norm of $\R^{2}$ is denoted by $\norm{x}_{p}^{p} = \sum_{i} \abs{x_{i}}^{p}$.
For $D \in \N$, we denote by $u: \Omega \rightarrow \mathbb{R}^{D}$, respectively, by $u^{\delta}: \Omega \rightarrow \mathbb{R}^{D}$ the unknown original and the observed, possibly corrupted, $D$-dimensional image.
A vector-valued function $u = (u_{1}, \dots, u_{D})^{\top}$ is given in terms of its components.
We write $\partial_{i}^{k} u$ for the $k$-th partial derivative of $u$ with respect to $x_{i}$ and, for simplicity, we write $\partial_{i} u$ for $k = 1$.
For $D = 1$, the spatial gradient of $u$ in $\R^{2}$ is $\nabla u = (\partial_{1} u, \partial_{2} u)^{\top}$ and for $D = 3$ it is given by the matrix $\nabla u = (\partial_{i} u_{j})_{ij}$.
In the former case, its $p$-norm is simply $\norm{\nabla u}_{p}$ and in the latter case it is given by $\norm{\nabla u}_{p}^{p} = \sum_{j=1}^{D} \sum_{i=1}^{2} (\partial_{i} u_{j})^{p}$.
For a function $f: \Omega \to \R^{D}$ and $1 \le p < \infty$ we denote the $p$-power of the norm of $L^{p}(\Omega, \R^{D})$ by $\norm{f}_{L^{p}(\Omega)}^{p} = \int_{\Omega} \norm{f(x)}_{p}^{p} \; dx$.
Moreover, $\norm{f}_{L^{\infty}(\Omega)}$ denotes the essential supremum norm.
A continuous image gives rise to a discrete representation $u_{i,j} \in \R^{D}$ of each pixel.
A digital image is stored in matrix form $u \in \R^{m \times n \times D}$, where $m$ denotes the number of columns arranged left to right and $n$ the number of rows stored from top to bottom.
\section{Problem Formulation} \label{sec:model}

Let $u^{\delta}: \Omega \to \R^{D}$ be an observed and possibly corrupted image generated by \eqref{eq:model}.
We aim to reconstruct an approximation of the original image $u: \Omega \to \R^{D}$.
The main difficulty is that $\Phi^{-1}$ might not exist and that $u^{\delta}$ might exhibit noise.
Lenzen and Scherzer \cite{LenSch11} propose to find a minimising pair $(u, \Phi)$ to the energy
\begin{equation}
	\int_{\Omega} \norm{\Phi(x) - x}_{2}^{2} \; dx + \alpha \mathcal{R}(u)
\label{eq:dispreg}
\end{equation}
such that $(u, \Phi)$ satisfies \eqref{eq:model}.
Here, $\mathcal{R}(u)$ is a regularisation functional and $\alpha > 0$ is a parameter.
In what follows, we consider one exemplary class of displacements which arise in dejittering problems.
They are of the form
\begin{equation}
	\Phi = \Id + d,
\label{eq:phi}
\end{equation}
with $d: \Omega \to \Omega$ depending on the particular jitter model.
Typically, $\mathcal{R}(u)$ is chosen in accordance with $d$.
We assume that $d$ is Gaussian distributed around zero with $\sigma^{2}$ variance and whenever $x + d$ lies outside $\Omega$ we typically have $u^{\delta}(x) = 0$.
In order to approximately reconstruct $u$ we will assume that, for every $x \in \Omega$, there exists
\begin{equation*}
	d(x) = \arg\inf \{ \norm{v}_{2} \mid v \in \R^{2}, u(x) = u^{\delta}(x - v) \}.
\end{equation*}
In other words, we can invert \eqref{eq:model} and locally reconstruct $u$ by finding $d$.
While this requirement trivially holds true for line jitter under appropriate treatment of the boundaries, it is not guaranteed in the cases of line pixel jitter and pixel jitter.
Moreover, as a consequence of \eqref{eq:phi} and \eqref{eq:bd} we have, for $i \in \{1, 2\}$,
\begin{equation*}
	\norm{\Phi_{i}(x) - x_{i}}_{L^{\infty}(\Omega)} = \norm{d_{i}}_{L^{\infty}(\Omega)} \le \rho.
\end{equation*}

\paragraph{\textbf{Line Jitter.}}
In this model, the corrupted image is assumed to be created as
\begin{equation}
	u^{\delta}(x_{1}, x_{2}) = u(x_{1} + d(x_{2}), x_{2}) + \eta(x_{1}, x_{2}),
\label{eq:lj}
\end{equation}
where $d: [0, H] \rightarrow \mathbb{R}$ is a random displacement and $\eta: \Omega \to \R^{D}$ is typically Gaussian white noise.
The corruption arises from a horizontal shift of each line by a random amount, resulting in visually unappealing, jagged shapes.
Assuming zero noise, \eqref{eq:lj} can be inverted within $[\rho, W - \rho] \times [0, H]$ given $d$.
The original image is thus given by
\begin{equation}
	u(x_{1}, x_{2}) = u^{\delta}(x_{1} - d(x_{2}), x_{2}).
\label{eq:ljinverted}
\end{equation}
For $\eta \not\equiv 0$ additional image denoising is required, see e.g. \cite[Chap.~4]{SchGraGroHalLen09} for standard methods of variational image denoising.
We minimise the energy
\begin{equation}
	\mathcal{E}_{\alpha, p}^{k}(d) \coloneqq \alpha \norm{d}_{L^{2}([0, H])}^{2} + \sum_{\ell=1}^{k} \iint_{\Omega} \norm{\partial_{2}^{\ell} u^{\delta}(x_{1} - d(x_{2}), x_{2})}_{p}^{p} \; dx_{1} \, dx_{2},
\label{eq:ldj}
\end{equation}
subject to $\norm{d}_{L^{\infty}([0, H])} \le \rho$.
The first term in \eqref{eq:ldj} is suitable for displacements which are Gaussian distributed around zero.
It prevents the reconstruction from being fooled by dominant vertical edges and effectively removes a constant additive displacement, resulting in a centred image.
The second term utilises identity \eqref{eq:ljinverted} and penalises the sum of the magnitudes of vertical derivatives of the reconstructed image up to $k$-th order.
Here, $\alpha \ge 0$ is a regularisation parameter and $p > 0$ is an exponent.
The proposed framework for the solution of \eqref{eq:ldj} is more general and allows a different exponent $p_{\ell}$ and an individual weight for each term in the sum.
Moreover, any other norm of $d$ might be considered.

We restrict ourselves to discretisations of $\mathcal{E}_{\alpha, p}^{1}$ and $\mathcal{E}_{\alpha, p}^{2}$ and assume that images are piecewise constant, are defined on a regular grid, and that all displacements are integer.
Then, for every $j \in \{1, \dots, n\}$, we seek $d_{j} \in \mathcal{L}$ with $\mathcal{L} \coloneqq \{-\rho, \dots, \rho\}$.
By discretising with backwards finite differences we obtain
\begin{align}
	\mathcal{E}_{\alpha, p}^{1}(d) & \approx \sum_{j=1}^{n} \alpha \abs{d_{j}}^{2} + \sum_{j=2}^{n} \sum_{i=1}^{m} \norm{u^{\delta}_{i - d_{j}, j} - u^{\delta}_{i - d_{j-1}, j-1}}_{p}^{p}, \label{eq:ldj1} \\
	\mathcal{E}_{\alpha, p}^{2}(d) & \approx \mathcal{E}_{\alpha, p}^{1}(d) + \sum_{j=3}^{n} \sum_{i=1}^{m} \norm{u^{\delta}_{i - d_{j}, j} - 2u^{\delta}_{i - d_{j-1}, j-1} + u^{\delta}_{i - d_{j-2}, j - 2}}_{p}^{p}. \label{eq:ldj2}
\end{align}

\paragraph{\textbf{Line Pixel Jitter.}}
Images degraded by line pixel jitter are generated by
\begin{equation}
	u^{\delta}(x_{1}, x_{2}) = u(x_{1} + d(x_{1}, x_{2}), x_{2}) + \eta(x_{1}, x_{2}),
\label{eq:lpj}
\end{equation}
where $d: \Omega \rightarrow \mathbb{R}$ now depends on both $x_{1}$ and $x_{2}$.
As before, the displacement is in horizontal direction only.
Images appear pixelated and exhibit horizontally fringed edges.
In contrast to line jitter, in the noise-free setting one is in general not able to reconstruct the original image solely from $u^{\delta}$, unless $d(\cdot, x_{2}): [0, W] \to \R$ is bijective on $[0, W]$ for every $x_{2} \in [0, H]$.
As a remedy, we utilise the fact that $d$ is assumed to be independent (in $x_{1}$) and identically Gaussian distributed around zero.
The idea is that the original value $u(x_{1}, x_{2})$, or a value sufficiently close, at $(x_{1}, x_{2})$ is likely be found in a close neighbourhood with respect to the $x_{1}$-direction.
We assume that
\begin{equation}
	d(x_{1}, x_{2}) = \arg\inf\{\abs{v} \mid v \in \R, u(x_{1}, x_{2}) = u^{\delta}(x_{1} - v, x_{2}) \}
\label{eq:lpdjinverted}
\end{equation}
exists and that $\norm{d}_{L^{\infty}(\Omega)} \le \rho$.
Clearly, it is not unique without further assumptions, however, finding one $d$ is sufficient.
We utilise \eqref{eq:lpdjinverted} and minimise
\begin{equation}
	\mathcal{F}_{\alpha, p}^{k}(d) \coloneqq \alpha \norm{d}_{L^{2}(\Omega)}^{2} + \sum_{\ell=1}^{k} \iint_{\Omega} \norm{\partial_{2}^{\ell} u^{\delta}(x_{1} - d(x_{1}, x_{2}), x_{2})}_{p}^{p} \; dx_{1} \, dx_{2}
\label{eq:lpdj}
\end{equation}
subject to $\norm{d}_{L^{\infty}(\Omega)} \le \rho$.
Again, $p > 0$ and $\alpha \ge 0$.
In contrast to before, we decompose the objective into a series of minimisation problems, which then can be solved independently and in parallel by DP.
To this end, let us rewrite
\begin{align*}
	\mathcal{F}_{\alpha}^{k}(d) & = \int_{0}^{W} \int_{0}^{H} \left( \alpha \abs{d(x_{1}, x_{2})}^{2} + \sum_{\ell=1}^{k} \norm{\partial_{2}^{\ell} u^{\delta}(x_{1} - d(x_{1}, x_{2}), x_{2})}_{p}^{p} \right) \; dx_{2} \; dx_{1}.
\end{align*}

As before we consider derivatives up to second order.
Assuming piecewise constant images defined on a regular grid we seek, for each $(i, j) \in \{1, \dots, m\} \times \{1, \dots, n\}$, a displacement $d_{i, j} \in \mathcal{L}$ with $\mathcal{L} \coloneqq \{-\rho, \dots, \rho\}$.
The finite-dimensional approximations of $\mathcal{F}_{\alpha, p}^{1}$ and $\mathcal{F}_{\alpha, p}^{2}$ hence read
\begin{align}
	\mathcal{F}_{\alpha, p}^{1}(d) & \approx \sum_{i=1}^{m} \left( \sum_{j=1}^{n} \alpha \abs{d_{i, j}}^{2} + \sum_{j=2}^{n} \norm{u^{\delta}_{i - d_{i, j}, j} - u^{\delta}_{i - d_{i, j-1}, j-1}}_{p}^{p} \right), \label{eq:lpdj1} \\
	\mathcal{F}_{\alpha, p}^{2}(d) & \approx \mathcal{F}_{\alpha, p}^{1}(d) + \sum_{i=1}^{m} \sum_{j=3}^{n} \norm{u^{\delta}_{i - d_{i, j}, j} - 2u^{\delta}_{i - d_{i, j-1}, j-1} + u^{\delta}_{i - d_{i, j-2}, j - 2}}_{p}^{p}. \label{eq:lpdj2}
\end{align}

\paragraph{\textbf{Pixel Jitter.}}
An image corrupted by pixel jitter is generated by
\begin{equation}
	u^{\delta}(x_{1}, x_{2}) = u(x_{1} + d_{1}(x_{1}, x_{2}), x_{2} + d_{2}(x_{1}, x_{2})) + \eta(x_{1}, x_{2}),
\label{eq:pj}
\end{equation}
where $d = (d_{1}, d_{2})^{\top}$, $d_{i}: \Omega \to \R$, is now a vector-valued displacement.
Edges in degraded images appear pixelated and fringed in both directions.
Unless the displacement $d$ is bijective from $\Omega$ to itself and $\eta \equiv 0$, there is no hope that $u$ can be perfectly reconstructed from $u^{\delta}$.
However, we assume the existence of
\begin{equation*}
	d(x) = \arg\inf\{\norm{v}_{2} \mid v \in \R^{2}, u(x) = u^{\delta}(x - v) \}
\end{equation*}
such that $\norm{d_{i}}_{L^{\infty}(\Omega)} \le \rho$, for $i \in \{1, 2\}$.
For $p > 0$ and $\alpha \ge 0$, we minimise
\begin{equation*}
	\mathcal{G}_{\alpha, p}(d) \coloneqq \alpha \norm{d}_{L^{2}(\Omega)}^{2} + \int_{\Omega} \norm{\nabla (u^{\delta}(x - d(x))}_{p}^{p} \; dx
\end{equation*}
subject to $\norm{d_{i}}_{L^{\infty}(\Omega)} \le \rho$, $i \in \{1, 2\}$.
In contrast to before, we only consider first-order derivatives of the sought image.

Assuming piecewise constant images on a regular grid and integer displacements, we seek for each $(i, j) \in \{1, \dots, m\} \times \{1, \dots, n\}$ an offset $d_{i, j} \in \mathcal{L}$ with $\mathcal{L} \coloneqq \{-\rho, \dots, \rho\}^{2}$.
In further consequence, we obtain
\begin{equation}
\begin{aligned}
	\mathcal{G}_{\alpha, p}(d) & \approx \sum_{i=1}^{m} \sum_{j=1}^{n} \alpha \norm{d_{i, j}}_{2}^{2} + \sum_{i=2}^{m} \sum_{j=1}^{n} \norm{u^{\delta}_{(i, j) - d_{i, j}} - u^{\delta}_{(i - 1, j) - d_{i - 1, j}}}_{p}^{p} \\
	& \qquad + \sum_{i=1}^{m} \sum_{j=2}^{n} \norm{u^{\delta}_{(i, j) - d_{i, j}} - u^{\delta}_{(i, j - 1) - d_{i, j - 1}}}_{p}^{p}.
\end{aligned}
\label{eq:pdj}
\end{equation}
\section{Numerical Solution}

\paragraph{\textbf{Dynamic Programming on a Sequence.}}
Suppose we are given $n \in \N$ elements and we aim to assign to each element $i$ a label from its associated space of labels $\mathcal{L}_{i}$.
Without loss of generality, we assume that all $\mathcal{L}_{i}$ are identical and contain finitely many labels.
A labelling is denoted by $x = (x_{1}, \dots, x_{n})^{\top} \in \mathcal{L}^{n}$, where $x_{i} \in \mathcal{L}$ is the label assigned to the $i$-th element.

Let us consider the finite-dimensional minimisation problem
\begin{equation}
	\min_{x \in \mathcal{L}^{n}} \; \sum_{i=1}^{n} \varphi_{i}(x_{i}) + \sum_{i=2}^{n} \psi_{i-1, i}(x_{i-1}, x_{i}).
\label{eq:fdmin}
\end{equation}
We denote a minimiser by $x^{*}$ and its value by $E(x^{*})$.
Here, $\varphi_{i}(x_{i})$ is the penalty of assigning the label $x_{i} \in \mathcal{L}$ to element $i$, whereas $\psi_{i-1, i}(x_{i-1}, x_{i})$ is the cost of assigning $x_{i-1}$ to the element $i-1$ and $x_{i}$ to $i$, respectively.
Several minimisers of \eqref{eq:fdmin} might exist but finding one is sufficient for our purpose.
Energies \eqref{eq:fdmin} typically arise from the discretisation of computer vision problems such as one-dimensional signal denoising, stereo matching, or curve detection.
We refer to \cite{FelZab11} for a comprehensive survey and to \cite{CorLeiRivSte09} for a general introduction to DP.

The basic idea for solving \eqref{eq:fdmin} is to restate the problem in terms of smaller subproblems.
Let $\abs{\mathcal{L}}$ denote the cardinality of $\mathcal{L}$.
Then, for $j \le n$ and $\ell \le \abs{\mathcal{L}}$, we define $\mathrm{OPT}(j, \ell)$ as the minimum value of the above minimisation problem~\eqref{eq:fdmin} over the first $j$ elements with the value of the last variable $x_{j}$ being set to the $\ell$-th label.
That is,
\begin{equation*}
	\mathrm{OPT}(j, \ell) \coloneqq \min_{x \in \mathcal{L}^{j-1}} \; \sum_{i=1}^{j} \varphi_{i}(x_{i}) + \sum_{i=2}^{j} \psi_{i-1, i}(x_{i-1}, x_{i}).
\end{equation*}
Moreover, we define $\mathrm{OPT}(0, \ell) \coloneqq 0$ for all $\ell \le \abs{\mathcal{L}}$ and $\psi_{0, 1} \coloneqq 0$, and show the following recurrence:

\begin{proposition}
Let $x_{\ell} \in \mathcal{L}$ denote the label of the $j$-th element.
Then,
\begin{equation*}
	\mathrm{OPT}(j, \ell) = \varphi_{j}(x_{\ell}) + \min_{x_{j-1} \in \mathcal{L}} \left\{ \mathrm{OPT}(j-1, x_{j-1}) + \psi_{j-1, j}(x_{j-1}, x_{\ell}) \right\}.
\end{equation*}
\end{proposition}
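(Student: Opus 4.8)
The plan is to establish the recurrence by \emph{peeling off the last element}: I split the objective in the definition of $\mathrm{OPT}(j,\ell)$ into the part that involves only the variables $x_{1},\dots,x_{j-1}$ and the part that involves the (now fixed) last variable $x_{j}=x_{\ell}$, and then interchange the order of the minimisation.

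First, fix $j$ and the label $x_{\ell}$ of element $j$. In the sum defining $\mathrm{OPT}(j,\ell)$ the only terms containing $x_{j}$ are $\varphi_{j}(x_{j})=\varphi_{j}(x_{\ell})$ and $\psi_{j-1,j}(x_{j-1},x_{\ell})$, so
\[
\sum_{i=1}^{j}\varphi_{i}(x_{i})+\sum_{i=2}^{j}\psi_{i-1,i}(x_{i-1},x_{i})
=\varphi_{j}(x_{\ell})+\psi_{j-1,j}(x_{j-1},x_{\ell})
+\Bigl(\sum_{i=1}^{j-1}\varphi_{i}(x_{i})+\sum_{i=2}^{j-1}\psi_{i-1,i}(x_{i-1},x_{i})\Bigr),
\]
and the constant $\varphi_{j}(x_{\ell})$ can be pulled out of the minimisation over $(x_{1},\dots,x_{j-1})^{\top}\in\mathcal{L}^{j-1}$.

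Second, I decompose this minimisation as an outer minimisation over $x_{j-1}\in\mathcal{L}$ followed by an inner one over $(x_{1},\dots,x_{j-2})^{\top}\in\mathcal{L}^{j-2}$, using that the minimum over a finite product set factorises coordinate-block-wise. Once $x_{j-1}$ is fixed, the term $\psi_{j-1,j}(x_{j-1},x_{\ell})$ does not depend on the inner variables, so the inner minimisation applies only to the parenthesised expression and, by the very definition of $\mathrm{OPT}$, equals $\mathrm{OPT}(j-1,x_{j-1})$. Gathering the pieces yields precisely the claimed identity.

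Third, I treat the boundary case $j=1$ with the stipulated conventions $\mathrm{OPT}(0,\ell)\coloneqq 0$ and $\psi_{0,1}\coloneqq 0$: the right-hand side collapses to $\varphi_{1}(x_{\ell})+\min_{x_{0}\in\mathcal{L}}\{0+0\}=\varphi_{1}(x_{\ell})=\mathrm{OPT}(1,\ell)$. I do not anticipate a genuine obstacle; the only points needing care are getting the summation ranges right after separating element $j$ and justifying the interchange $\min_{(a,b)}f=\min_{a}\min_{b}f$ on finite sets, together with pulling the summand $\psi_{j-1,j}(x_{j-1},x_{\ell})$, which is constant in the inner block, through the inner minimum.
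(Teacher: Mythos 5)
Your proof is correct and the computational core --- peeling off $\varphi_{j}(x_{\ell})$ and $\psi_{j-1,j}(x_{j-1},x_{\ell})$, factorising the minimisation over $\mathcal{L}^{j-1}$ into an outer minimum over $x_{j-1}$ and an inner one that equals $\mathrm{OPT}(j-1,x_{j-1})$ by definition --- is exactly what the paper does, except that the paper dresses the argument as an induction on $j$ (whose hypothesis it never genuinely needs, since it too just invokes the definition of $\mathrm{OPT}(j-1,\cdot)$). Your direct version, including the explicit check of the $j=1$ boundary conventions, is if anything slightly cleaner.
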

\begin{proof}
By induction on the elements $j \in \N$.
Induction basis $j = 1$.
Thus, $\mathrm{OPT}(1, \ell) = \varphi_{1}(x_{\ell})$ holds.
Inductive step.
Assume it holds for $j-1$.
Then,
\begin{align*}
	\mathrm{OPT}(j, \ell) & = \min_{x \in \mathcal{L}^{j-1}} \; \sum_{i=1}^{j} \varphi_{i}(x_{i}) + \sum_{i=2}^{j} \psi_{i-1, i}(x_{i-1}, x_{i}) \\
	& = \min_{x_{j-1} \in \mathcal{L}} \; \left\{ \mathrm{OPT}(j-1, x_{j-1}) + \varphi_{j}(x_{\ell}) + \psi_{j-1, j}(x_{j-1}, x_{\ell}) \right\} \\
	& = \varphi_{j}(x_{\ell}) + \min_{x_{j-1} \in \mathcal{L}} \; \left\{ \mathrm{OPT}(j-1, x_{j-1}) + \psi_{j-1, j}(x_{j-1}, x_{\ell}) \right\}.
\end{align*} \qed
\end{proof}

It is straightforward to see that Alg.~\ref{alg:dp} correctly computes all values of $\mathrm{OPT}(j, \ell)$ and hence the minimum value of~\eqref{eq:fdmin}.
Its running time is in $\BigO{n \abs{\mathcal{L}}^{2}}$ and its memory requirement in $\BigO{n \abs{\mathcal{L}}}$.
Recovering a minimiser to~\eqref{eq:fdmin} can be done either by a subsequent backward pass or even faster at the cost of additional memory by storing a minimising label $x_{i}$ in each iteration.

One can generalise the algorithm to energies involving higher-order terms of $q \ge 2$ consecutive unknowns yielding a running time of $\BigO{n \abs{\mathcal{L}}^{q}}$ and a memory requirement of $\BigO{n \abs{\mathcal{L}}^{q-1}}$.
We refer to~\cite{FelZab11} for the details.
The minimisation problems encountered in Sec.~\ref{sec:model} involve terms of order at most three.
It is straightforward to apply the above framework to \eqref{eq:ldj1}, \eqref{eq:ldj2}, \eqref{eq:lpdj1}, and \eqref{eq:lpdj2}.

\begin{algorithm}[t]
\KwIn{Integer $n$, functions $\varphi_{j}$ and $\psi_{j-1, j}$.}
\KwOut{$E(x^{*})$.}
$B[1][\ell] \gets \varphi_{1}(x_{\ell})$, $\forall \ell \in \mathcal{L}$\;
\For{$j \gets 2$ \KwTo $n$}{
	\For{$\ell \gets 1$ \KwTo $\abs{\mathcal{L}}$}{
		$B[j][\ell] \gets \varphi_{j}(x_{\ell}) + \min_{x_{j-1} \in \mathcal{L}} \{ B[j-1][x_{j-1}] + \psi_{j-1, j}(x_{j-1}, x_{\ell}) \}$\;
	}
}
\KwRet{$\min_{\ell} B[n][\ell]$\;}
\caption{Dynamic programming on a sequence.}
\label{alg:dp}
\end{algorithm}

\paragraph{\textbf{Energy Minimisation on Graphs.}}
A more general point of view is to consider \eqref{eq:fdmin} on (undirected) graphs.
Thereby, each element is associated with a vertex of the graph and one seeks a minimiser to
\begin{equation}
	\min_{x \in \mathcal{L}^{n}} \; \sum_{i} \varphi_{i}(x_{i}) + \sum_{i \sim j} \psi_{i, j}(x_{i}, x_{j}).
\label{eq:energy}
\end{equation}
The first term sums over all $n$ vertices whereas the second term sums over all pairs of vertices which are connected via an edge in the graph.
Such energies typically arise from the discretisation of variational problems on regular grids, such as for instance image denoising.
In general, \eqref{eq:energy} is NP-hard.
However, under certain restrictions on $\varphi_{i}$ and $\psi_{i, j}$, there exist polynomial-time algorithms.
Whenever the underlying structure is a tree or a sequence as above, the problem can be solved by means of DP without further restrictions.
See \cite{BlaKohRot11} for a general introduction to the topic.

Nevertheless, many interesting problems do fall in neither category.
One remedy is \emph{block coordinate descent} \cite{CheKol14,MenHeiGei15} in order to approximately minimise \eqref{eq:energy}, which we denote by $F$.
The essential idea is to choose in each iteration $t \in \N$ an index set $\mathcal{I}^{(t)} \subset \{1, \dots, n\}$ which contains much less elements than $n$, and to consider \eqref{eq:energy} only with respect to unknowns $x_{i}$, $i \in \mathcal{I}^{(t)}$.
The values of all other unknowns are taken from the previous iteration.
That is, one finds in each iteration
\begin{equation}
	x^{(t)} \coloneqq \argmin_{x_{i}: i \in \mathcal{I}^{(t)}} \; F^{(t)}(x),
\label{eq:bcd}
\end{equation}
where $F^{(t)}$ denotes the energy in~\eqref{eq:energy} with all unknowns not in $\mathcal{I}^{(t)}$ taking the values from $x^{(t-1)}$.
Typically, $\mathcal{I}^{(t)}$ is chosen such that \eqref{eq:bcd} can be solved efficiently.
It is straightforward to see that block coordinate descent generates a sequence $\{ x^{(t)} \}_{t}$ of solutions such that $F$ never increases.
The energy is thus guaranteed to converge to a local minimum with regard to the chosen $\mathcal{I}^{(t)}$.

We perform block coordinate descent for the solution of \eqref{eq:pdj} and iteratively consider instances of \eqref{eq:fdmin}.
Following the ideas in \cite{CheKol14,MenHeiGei15}, we consecutively minimise in each iteration over all odd columns, all even columns, all odd rows, and finally over all even rows.
During minimisation the displacements of all other rows, respectively columns, are fixed.
See Table~\ref{tab:summary} for the resulting algorithms.
\section{Numerical Results} \label{sec:experiments}

We present qualitative results on the basis of a test image\footnote{Taken from BSDS500 dataset at \url{https://www.eecs.berkeley.edu/Research/Projects/CS/vision/grouping/resources.html}} and create image degraded by line jitter, line pixel jitter, and pixel jitter by sampling displacements from a Gaussian distribution with variance $\sigma^{2} = 1.5$, and rounding them to the nearest integer, see Fig.~\ref{fig:jitter}.
Vector-valued displacements are sampled component-wise.
In addition, we create instances with additive Gaussian white noise $\eta$ of variance $\sigma_{\eta}^{2} = 0.01$.
We then apply to each instance the appropriate algorithm with varying parameters $\alpha$ and $p$, and show the results which maximise jitter removal, see Figs.~\ref{fig:ldj}, \ref{fig:lpdj}, and \ref{fig:pdj}.
The value of $\rho$ is set to the maximum displacement that occurred during creation.
A Matlab/Java implementation is available online.\footnote{\url{https://www.csc.univie.ac.at}}

While regularisation does not seem to be crucial for line jitter removal---apart from a centred image---for line pixel jitter and pixel jitter removal it is.
Moreover, including second-order derivatives in our models does not necessarily improve the result.
In all cases our results indicate effective removal of jitter, even in the presence of moderate noise.
However, subsequent denoising seems obligatory.

\begin{figure}[t]
	\centering
	\includegraphics[width=0.24\textwidth]{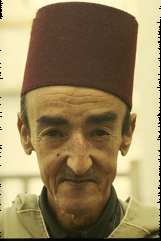}
	\hfill
	\includegraphics[width=0.24\textwidth]{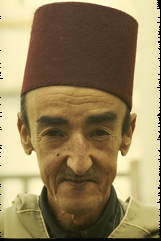}
	\hfill
	\includegraphics[width=0.24\textwidth]{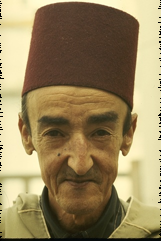}
	\hfill
	\includegraphics[width=0.24\textwidth]{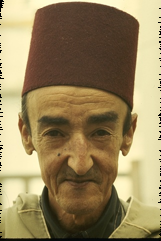} \\ \vspace{0.5em}
		\includegraphics[width=0.24\textwidth]{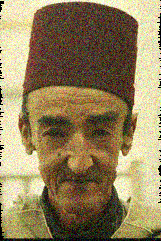}
	\hfill
	\includegraphics[width=0.24\textwidth]{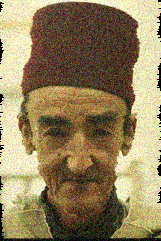}
	\hfill
	\includegraphics[width=0.24\textwidth]{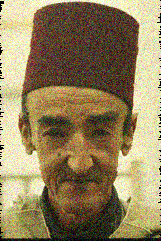}
	\hfill
	\includegraphics[width=0.24\textwidth]{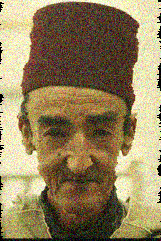}
\caption{Line jitter removal. Top row: minimisers of $\mathcal{E}_{0, 0.5}^{1}$, $\mathcal{E}_{0, 0.5}^{2}$, $\mathcal{E}_{0.01, 0.5}^{1}$, and $\mathcal{E}_{0.01, 0.5}^{2}$, cf. \eqref{eq:ldj1} and \eqref{eq:ldj2}. Bottom row: minimisers of $\mathcal{E}_{0, 3}^{1}$, $\mathcal{E}_{0, 3}^{2}$, $\mathcal{E}_{1000, 3}^{1}$, and $\mathcal{E}_{1000, 3}^{2}$ when noise $\eta$ is present in each of the four $u^{\delta}$.}
\label{fig:ldj}
\end{figure}

\begin{figure}[t]
	\centering
	\includegraphics[width=0.24\textwidth]{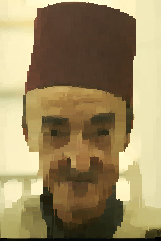}
	\hfill
	\includegraphics[width=0.24\textwidth]{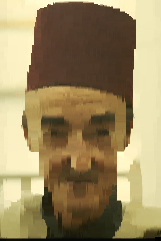}
	\hfill
	\includegraphics[width=0.24\textwidth]{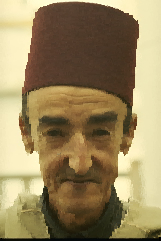}
	\hfill
	\includegraphics[width=0.24\textwidth]{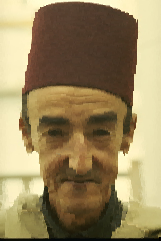} \\ \vspace{0.5em}
	\includegraphics[width=0.24\textwidth]{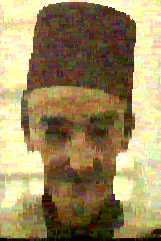}
	\hfill
	\includegraphics[width=0.24\textwidth]{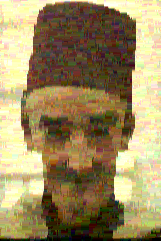}
	\hfill
	\includegraphics[width=0.24\textwidth]{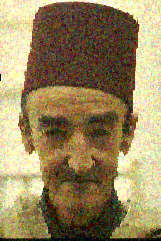}
	\hfill
	\includegraphics[width=0.24\textwidth]{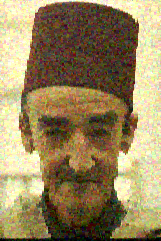}
\caption{Line pixel jitter removal. Top row: minimisers of $\mathcal{F}_{0, 0.5}^{1}$, $\mathcal{F}_{0, 0.5}^{2}$, $\mathcal{F}_{4, 0.5}^{1}$, and $\mathcal{F}_{4, 0.5}^{2}$, cf. \eqref{eq:lpdj1} and \eqref{eq:lpdj2}. Bottom row: minimisers of $\mathcal{F}_{0, 0.5}^{1}$, $\mathcal{F}_{0, 0.5}^{2}$, $\mathcal{F}_{5, 0.5}^{1}$, and $\mathcal{F}_{5, 0.5}^{2}$ when noise $\eta$ is present in each of the four $u^{\delta}$.}
\label{fig:lpdj}
\end{figure}

\begin{figure}[t]
	\centering
	\includegraphics[width=0.24\textwidth]{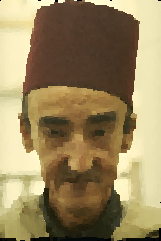}
	\hfill
	\includegraphics[width=0.24\textwidth]{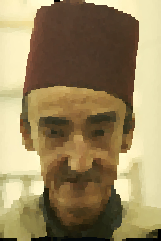}
	\hfill
	\includegraphics[width=0.24\textwidth]{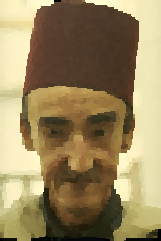}
	\hfill
	\includegraphics[width=0.24\textwidth]{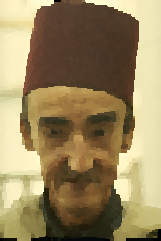} \\ \vspace{0.5em}
	\includegraphics[width=0.24\textwidth]{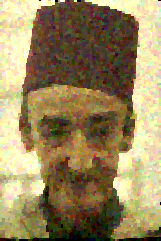}
	\hfill
	\includegraphics[width=0.24\textwidth]{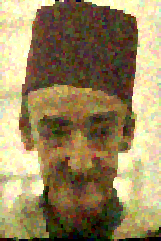}
	\hfill
	\includegraphics[width=0.24\textwidth]{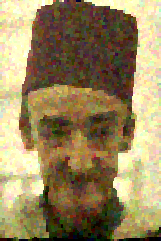}
	\hfill
	\includegraphics[width=0.24\textwidth]{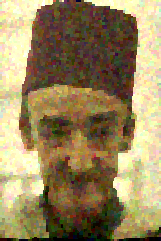} 
\caption{Line pixel jitter removal. Top row: sequence $u^{(1)}, \dots, u^{(4)}$, of minimisers of $\mathcal{G}_{4, 0.5}$, cf. \eqref{eq:pdj}. Bottom row: sequence $u^{(1)}, \dots, u^{(4)}$, of minimisers of $\mathcal{G}_{4, 0.5}$ when noise $\eta$ is present in $u^{\delta}$. Results for $\alpha = 0$ are excluded due to poor quality.}
\label{fig:pdj}
\end{figure}
\section{Related work}

Kokaram~et~al. \cite{KokRay92,KokRayVanBie97,Kok98} were among the first to consider line dejittering.
They employed a block-based autoregressive model for grey-valued images and developed an iterative multi-resolution scheme to estimate line displacements.
Subsequent drift compensation removes low frequency oscillations.
Their methods seek displacements which reduce the vertical gradients of the sought image.

For line jitter removal, a naive approach consists of fixing the first line of the image and successive minimisation of a mismatch function between consecutive lines.
This greedy algorithm tends to introduce vertical lines in the reconstructed image and fails in the presence of dominant non-vertical edges \cite{Kok98,Lab03}.
As a remedy, Laborelli \cite{Lab03} proposed to apply DP and to recover a horizontal displacement for each line by minimising the sum of the pixel-wise differences between two or three consecutive lines.

Shen~\cite{She04} proposed a variational model for line dejittering in a Bayesian framework and investigated its properties for images in the space of bounded variation.
In order to minimise the non-linear and non-convex objective, an iterative algorithm that alternatively estimates the original image and the displacements is devised.

Kang and Shen~\cite{KanShe06} proposed a two-step iterative method termed ``bake and shake''.
In a first step, a Perona-Malik-type diffusion process is applied in order to suppress high-frequency irregularities in the image and to smooth distorted object boundaries.
In a second step, the displacement of each line is independently estimated by solving a non-linear least-squares problem.
In another article~\cite{KanShe07}, they investigated properties of slicing moments of images with bounded variation and proposed a variational model based on moment regularisation.

In \cite{Nik09a,Nik09}, Nikolova considered greedy algorithms for finite-dimensional line dejittering with bounded displacements.
These algorithms consider vertical differences between consecutive lines up to third-order and are applicable to grey-value as well as colour images.
In each step, a non-smooth and possibly non-convex function is minimised by enumeration, leading to an $\BigO{mn\rho}$ algorithm.
For experimental evaluation various error measures are discussed.

Lenzen and Scherzer~\cite{LenSch09,LenSch11} considered partial differential equations for displacement error correction in multi-channel data.
Their framework is applicable to image interpolation, dejittering, and deinterlacing.
For line pixel dejittering they derived gradient flow equations for a non-convex variational formulation involving the total variation of the reconstructed image.

Dong~et~al.~\cite{DonPatSchOek15} treated the finite-dimensional models in~\cite{Nik09a,Nik09} in an infinite-dimensional setting.
They derived the corresponding energy flows and systematically investigated their applicability for various jitter problems.

\section{Conclusion}

In this article we presented efficient algorithms for line jitter, line pixel jitter, and pixel jitter removal in a finite-dimensional setting.
By assuming (approximate) invertibility of the image acquisition equation we were able to cast the minimisation problems into a well-known DP framework.
Our experimental results indicate effective removal of jitter, even in the presence of moderate noise.

\def\cprime{$'$}
  \providecommand{\noopsort}[1]{}\def\ocirc#1{\ifmmode\setbox0=\hbox{$#1$}\dimen0=\ht0
  \advance\dimen0 by1pt\rlap{\hbox to\wd0{\hss\raise\dimen0
  \hbox{\hskip.2em$\scriptscriptstyle\circ$}\hss}}#1\else {\accent"17 #1}\fi}

\end{document}